\theoremstyle{plain}
\newtheorem{theorem}{Theorem}[section]
\newtheorem{corollary}[theorem]{Corollary}
\newtheorem{lemma}[theorem]{Lemma}
\theoremstyle{definition}
\newtheorem{definition}[theorem]{Definition}
\theoremstyle{remark}
\definecolor{afcol}{rgb}{1,0,0}
\providecommand{\keywords}[1]{\textbf{\textit{Keywords:}} #1}
\begin{document}

\title{Lipschitz and Fourier type conditions with moduli of continuity in rank 1 symmetric spaces}

\date{}

\author[1]{Arran Fernandez\thanks{Email: \texttt{arran.fernandez@emu.edu.tr}}}
\author[2,3]{Joel E. Restrepo\thanks{Email: \texttt{cocojoel89@yahoo.es}; \texttt{joel.restrepo@nu.edu.kz}}}
\author[2]{Durvudkhan Suragan\thanks{Email: \texttt{durvudkhan.suragan@nu.edu.kz}}}

\affil[1]{{\small Department of Mathematics, Faculty of Arts and Sciences, Eastern Mediterranean University, Famagusta, Northern Cyprus, via Mersin 10, Turkey}}
\affil[2]{{\small Department of Mathematics, Nazarbayev University, Nur-Sultan, Kazakhstan}}
\affil[3]{{\small Institute of Mathematics, University of Antioquia, Medellin, Colombia}}

\maketitle

\begin{abstract}
Sufficient and necessary results have been proven on Lipschitz type integral conditions and bounds of its Fourier transform for an $L^2$ function, in the setting of Riemannian symmetric spaces of rank $1$ whose growth depends on a $k$th-order modulus of continuity.  
\end{abstract}

\keywords{generalised H\"older space; Lipschitz type condition; Fourier transform; moduli of continuity; translation operator; symmetric space}

\section{Introduction}

The inspiration for this research begins with the classical theorem of Titchmarsh \cite[Theorem 85]{ti} on the characterisation of functions in $L^2(\mathbb{R})$ satisfying an integral Lipschitz-type condition in terms of an asymptotic estimate for the growth of the norm of their Fourier transform. This theorem is stated as follows.

\begin{theorem}[Titchmarsh \cite{ti}] \label{Tit}
Let $f\in L^2(\mathbb{R})$ and $0<\alpha<1$. Then the following statements are equivalent.
\begin{enumerate}
\item There exists a constant $C>0$ such that for all sufficiently small $t>0$,
\[\int_{-\infty}^{\infty}\left|f(x+t)-f(x-t)\right|^2\,\mathrm{d}x\leq Ct^{2\alpha}.\]
\item There exists a constant $C>0$ such that for all sufficiently small $t>0$,
\[\left(\int^{-1/t}_{-\infty}+\int_{1/t}^{+\infty}\right)\left|\widehat{f}(x)\right|^2\,\mathrm{d}x\leq Ct^{2\alpha}.\]
\end{enumerate}
\end{theorem}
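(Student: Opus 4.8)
The plan is to use Plancherel's theorem to translate both statements into a single estimate on the frequency side, and then to pass between the two forms of that estimate by dyadic decomposition of the frequency variable. First I would compute the Fourier transform of $g_t(x) := f(x+t) - f(x-t)$: since translating $f$ by $t$ multiplies $\widehat f$ by a unimodular exponential, $\widehat{g_t}(\xi) = 2i\sin(t\xi)\,\widehat f(\xi)$, and Plancherel's theorem yields
\[
\int_{-\infty}^{\infty}\bigl|f(x+t)-f(x-t)\bigr|^2\,\mathrm{d}x = 4\int_{-\infty}^{\infty}\sin^2(t\xi)\,\bigl|\widehat f(\xi)\bigr|^2\,\mathrm{d}\xi
\]
(up to the normalisation constant of the chosen Fourier convention). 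Hence statement (1) is equivalent to the existence of $C>0$ with $\int_{-\infty}^{\infty}\sin^2(t\xi)|\widehat f(\xi)|^2\,\mathrm{d}\xi \le Ct^{2\alpha}$ for all small $t$, and it remains to show this is equivalent to statement (2).

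For the implication (1) $\Rightarrow$ (2), I would use the lower bound $\sin^2(t_j\xi) \ge \sin^2 1$ on the annulus $\{2^j/t \le |\xi| \le 2^{j+1}/t\}$ with $t_j := 2^{-j}t$ (so that $1 \le t_j|\xi| \le 2$ there). This bounds the mass of that annulus by a constant times $t_j^{2\alpha} = 2^{-2j\alpha}t^{2\alpha}$; summing over $j \ge 0$ gives $\int_{|\xi|\ge 1/t}|\widehat f(\xi)|^2\,\mathrm{d}\xi \le C' t^{2\alpha}$, the geometric series converging precisely because $\alpha > 0$.

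For the converse (2) $\Rightarrow$ (1), I would split $\int \sin^2(t\xi)|\widehat f(\xi)|^2\,\mathrm{d}\xi$ at $|\xi| = 1/t$. On $\{|\xi| \ge 1/t\}$ I would use $\sin^2 \le 1$ and invoke statement (2) directly. On $\{|\xi| \le 1/t\}$ I would use $\sin^2(t\xi) \le t^2\xi^2$ and decompose dyadically into annuli $\{2^{-j-1}/t \le |\xi| \le 2^{-j}/t\}$, bounding $\xi^2$ by $2^{-2j}/t^2$ and the mass of each annulus by the tail $\int_{|\xi|\ge 2^{-j-1}/t}|\widehat f|^2 \le C(2^{-j-1}/t)^{-2\alpha}$ coming from statement (2); the series $\sum_{j\ge 0}2^{-2j(1-\alpha)}$ converges because $\alpha < 1$, and the very-low-frequency remainder (below the scale past which (2) applies) is absorbed crudely using $\|\widehat f\|_2^2 < \infty$ together with $t^2 \le t^{2\alpha}$ for small $t$. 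Assembling the pieces recovers the frequency-side estimate, hence statement (1).

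The only real subtlety is tracking where the hypothesis $0 < \alpha < 1$ is used: positivity of $\alpha$ makes the high-frequency geometric series converge in the first implication, while $\alpha < 1$ makes the low-frequency series converge in the second. The remaining work — keeping all constants dependent only on $\alpha$ and $\sin 1$, and restricting each dyadic decomposition to the finitely many scales permitted by the ``sufficiently small $t$'' quantifier without affecting the bounding geometric series — is routine bookkeeping.
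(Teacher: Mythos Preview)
Your proof is correct. Note, however, that the paper does not actually prove Theorem~\ref{Tit}: it is quoted as a classical result of Titchmarsh and serves only as motivation. What the paper does prove are the generalisations in Theorems~\ref{main1}--\ref{main3} on rank~$1$ symmetric spaces, and your argument is close in spirit to those proofs (Plancherel, then split the frequency domain at $|\xi|=1/t$).

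There is one genuine point of divergence worth flagging. In the direction $(1)\Rightarrow(2)$, the paper's proof of Theorem~\ref{main1} relies on the uniform lower bound $1-\varphi_\lambda(t)\ge C$ for all $\lambda\ge1/t$ (Lemma~\ref{muno}, part~3), which yields the tail estimate in a single step with no summation. No such uniform bound holds for $\sin^2(t\xi)$ on $\{|\xi|\ge1/t\}$ because of its zeros, so your dyadic decomposition---applying hypothesis~(1) at the scales $t_j=2^{-j}t$ to control each annulus $2^j/t\le|\xi|\le2^{j+1}/t$ via $\sin^2(t_j\xi)\ge\sin^2 1$ there---is genuinely necessary in the classical setting and is the standard device. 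In the converse $(2)\Rightarrow(1)$, your dyadic sum on the low-frequency region plays exactly the role of the integration-by-parts argument the paper uses to bound the term $K_1$ in the proof of Theorem~\ref{main2}; the two techniques are interchangeable, and your handling of the very-low-frequency remainder via $t^2\le t^{2\alpha}$ mirrors the paper's treatment of $K_2$.
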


This result has been shown to be a very useful tool to give an alternative description of the Lipschitz or H\"older spaces, see e.g. \cite{daher2019,platonov} and references therein. Similar results using moduli of continuity for the characterisation of Lipschitz conditions by means of the Fourier transform can be found in \cite{2008,intro1,intro5} among others.     

Following mainly the ideas from \cite{2008,platonov,ti}, we study and establish growth conditions between functions in $L^2(\mathbb{R})$ satisfying an integral Lipschitz-type condition and its Fourier transforms by means of $k$th-order moduli of continuity over rank $1$ Riemannian symmetric spaces. Some examples of the latter spaces are the two-point homogeneous spaces \cite[Chapter 8]{5pla}, the $n$-dimensional sphere $\mathbb{S}^n$, and the $n$-dimensional Lobachevski\^i space. The results are in the form of sufficient and necessary conditions (under certain assumptions) for Lipschitz-type conditions using asymptotic estimates for the growth of the Fourier transform. In a very particular case, an analogue of Titchmarsh's original Theorem \ref{Tit} is recovered.
 
The paper has the following structure. Section \ref{preli} is devoted to setting up all the necessary framework for the problem: firstly all the machinery of Riemannian symmetric spaces, and secondly the essential definitions and properties of $k$th-order moduli of continuity. In Section \ref{mainre}, the main results are stated and proved, with all required conditions added for both the necessary and the sufficient condition. Finally, Section \ref{scsection} shows special cases and analogous results on the Euclidean space $\mathbb{R}^n$.

\section{Preliminaries}\label{preli}

This section is for recalling various definitions and facts which will be used later in proving the main results of the paper. Firstly, a discussion of semi-simple Lie groups and symmetric spaces with particular emphasis on the Fourier transform; for further information about these topics, the reader is recommended to \cite{2pla,3pla}. Secondly, a brief introduction to moduli of continuity and $k$th-order moduli of continuity will be given.

\subsection{Riemannian symmetric spaces}

A noncompact Riemannian symmetric space $X$ can be seen as a quotient space $G/K$ where $G$ is a connected noncompact semi-simple Lie group with finite center and $K$ is a maximal compact subgroup of $G$. Then $G$ acts transitively on $G/K$ by left translations. Moreover, $K$ coincides with the stationary subgroup of the point $o=eK\in X$, where $e$ is the identity of $G$.

The Iwasawa decomposition of the group $G$ is written $G=NAK$, where $N$, $A$, $K$ are the Lie subgroups of $G$ generated by the Lie algebras $\mathfrak {n}_{0}$, $\mathfrak {a}_{0}$ and $\mathfrak {k}_{0}$, respectively. Let $M$ be the centraliser of $A$ in $K$ and set $B=K/M$. The $G$-invariant measure on $X$ will be denoted $dx$, and the normalised $K$-invariant measure on $B$ and $K$ will be denoted $db$ and $dk$ respectively.

Assume the symmetric space $X$ has rank $1$, so that the real dual $\mathfrak{a}_0^*$ of the Lie algebra $\mathfrak{a}_0$ is 1-dimensional and can be identified isomorphically with $\mathbb{R}$. Let $W$ denote the finite Weyl group acting on $\mathfrak{a}_0^*$. Let $\Sigma\subset\mathfrak{a}_0^*$ be the set of all bounded roots, and $\Sigma^{+}$ be the subset of positive bounded roots. For any $\lambda\in\mathfrak{a}^*_0$, say $T_\lambda$ is the vector in $\mathfrak{a}_0$ such that $\lambda(T)=\langle T_\lambda,T\rangle$ for any $T\in\mathfrak{a}_0$ (here $\langle,\rangle$ is the Killing form on the Lie algebra $\mathfrak{g}$ of $G$). Then define $\mathfrak{a}_0^+=\left\{r\in\mathfrak{a}_0:\alpha(r)>0\,\,\text{for}\,\,\alpha\in\Sigma^+\right\}$ and $\mathfrak{a}_{0,+}^*=\left\{\lambda\in\mathfrak{a}_0^*:T_\lambda\in\mathfrak{a}_0^{+}\right\}$. Under the identification of $\mathfrak{a}_0^*$ with $\mathbb{R}$, it follows that $\mathfrak{a}_{0,+}^*$ can be identified with $\mathbb{R}^+$. For more details on this, see e.g. \cite{pla1999,platonov}.

For any $g\in G$, taking into account that $G=NAK$, there is a unique element $A(g)\in \mathfrak{a}_0$ such that 
\[g=n\cdot \text{exp}\,A(g)\cdot k,\quad n\in N,\,\,k\in K.\]       
The map $A:G\to \mathfrak{a}_0$ thus defined gives rise to a well-defined map $A:X\times B\to \mathfrak{a}_0$ as follows:
\[A(x,b):=A(k^{-1}g),\quad x=gK\in X=G/K,\,\, b=kM\in B=K/M.\]
Let $\mathcal{D}(X)$ and $\mathcal{D}(G)$ denote the sets of all infinitely differentiable compactly-supported functions on $X$ and $G$ respectively. Let $dg$ be the Haar measure on $G$, normalised so that
\[\int_X f(x)dx=\int_G f(go)dg,\quad f\in\mathcal{D}(X).\]
The Fourier transform on $X$ is defined by
\begin{equation}
\label{Fourier}
\widehat{f}(\lambda,b)=\int_X f(x)e^{(-i\lambda+\rho)A(x,b)}dx,\, \lambda\in\mathfrak{a}_0^*,\,b\in B=K/M,\,\rho=\frac12\sum_{\alpha\in\Sigma^+}\alpha,
\end{equation}
and the inverse Fourier transform on $X$ is defined by
\[f(x)=\frac{1}{|W|}\int_{\mathfrak{a}_0^*\times B}\widehat{f}(\lambda,b)e^{(i\lambda+\rho)A(x,b)}|c(\lambda)|^{-2}d\lambda db,\]
where $|W|$ is the order of the Weyl group, $d\lambda$ is the Euclidean measure on $\mathfrak{a}_0^*$ (identified with $\mathbb{R}$), and $c(\lambda)$ is the Harish-Chandra function. For more details see e.g. \cite{3pla,8pla}.

The Harish-Chandra formula defines a so-called spherical function $\varphi_{\lambda}$ on $G$ for any $\lambda\in\mathfrak{a}_0^*$ as follows:
\begin{equation}\label{pla2.9} 
\varphi_\lambda(g)=\int_K e^{(i\lambda+\rho)A(kg)}dk,\quad g\in G.
\end{equation}
One of its properties is that $\varphi_\lambda(k_1gk_2)=\varphi_\lambda(g)$ for any $k_1,k_2\in K$ and $g\in G$. Using this property it can be proved that $\varphi_\lambda(g)$ depends only on the distance $d(go,o)=t$ which is a non-negative real number. So, where convenient, $\varphi_\lambda(t)$ can be written instead of $\varphi_\lambda(g)$. See the proof of \cite[Lemma 3]{platonov} for more details.

The following lemma on some estimates of the function $\varphi_\lambda$ will be very useful later.

\begin{lemma}\cite[Lemmas 3.1--3.3]{pla1999}\label{muno}
For any $\lambda\in\mathbb{R}$ (identified with $\mathfrak{a}_0^*$) and any $t\in\mathbb{R}^+_0$:
\begin{enumerate}
\item $|\varphi_\lambda(t)|\leq1$, with equality just at the point $t=0$, i.e. $|\phi_\lambda(0)|=1$.
\item $1-\varphi_\lambda(t)\leq t^2(\lambda^2+\rho^2)$.
\item There exists a constant $C>0$, independent of $\lambda$ and $t$, such that $1-\varphi_\lambda(t)\geq C$ whenever $\lambda\geq1/t.$
\end{enumerate}
\end{lemma}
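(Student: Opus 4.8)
The plan is to read Lemma~\ref{muno} as a bundle of classical pointwise bounds for spherical functions on a rank~$1$ noncompact symmetric space (as in \cite{pla1999}): parts (1) and (2) admit short self-contained arguments, and the substance is in part (3). For (1), since $\lambda$ is real the spherical principal series representation with parameter $\lambda$ is unitary, so $\varphi_\lambda$ is a diagonal matrix coefficient of a unit $K$-fixed vector and hence a positive-definite function on $G$; consequently $|\varphi_\lambda(g)|\le\varphi_\lambda(e)$ for all $g$. Evaluating \eqref{pla2.9} at $g=e$ and noting that $A(k)=0$ for $k\in K$ (uniqueness of the Iwasawa decomposition) gives $\varphi_\lambda(e)=\int_K dk=1$, so $|\varphi_\lambda(t)|\le1$. (Alternatively, bounding the integrand in \eqref{pla2.9} by its modulus yields $|\varphi_\lambda(g)|\le\int_K e^{\rho(A(kg))}\,dk=\varphi_0(g)$, reducing matters to the classical $0<\varphi_0\le1$.) For the equality clause I would use the explicit realisation of $\varphi_\lambda$ in the rank~$1$ case as a Jacobi (Gauss hypergeometric) function of $t$: being real-analytic and nonconstant in $t$, it cannot attain its maximal modulus $1$ on an interval, so $|\varphi_\lambda(t)|=1$ forces $t=0$; equivalently one may invoke the strong maximum principle for the eigenfunction equation below.

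For (2) I would regard $\varphi_\lambda$ as a radial function $\varphi_\lambda(t)$, $t=d(go,o)$, which solves the radial part of $\Delta_X\varphi_\lambda=-(\lambda^2+\rho^2)\varphi_\lambda$ with $\varphi_\lambda(0)=1$, i.e. in Sturm--Liouville form
\[\bigl(A(t)\,\varphi_\lambda'(t)\bigr)'=-(\lambda^2+\rho^2)\,A(t)\,\varphi_\lambda(t),\]
where $A(t)>0$ is the nondecreasing density of the radial measure on $X$ with $A(0)=0$. Integrating once, using $\lim_{t\to0^+}A(t)\varphi_\lambda'(t)=0$ and $|\varphi_\lambda|\le1$ from (1),
\[|\varphi_\lambda'(t)|\le\frac{\lambda^2+\rho^2}{A(t)}\int_0^t A(s)\,ds\le(\lambda^2+\rho^2)\,t,\]
the last step because $A$ is nondecreasing. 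Integrating once more, $1-\varphi_\lambda(t)\le\int_0^t|\varphi_\lambda'(s)|\,ds\le\tfrac12(\lambda^2+\rho^2)t^2\le t^2(\lambda^2+\rho^2)$, which is (2).

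Part (3) is the hard one, and I would prove it by contradiction. Suppose there are $\lambda_j>0$, $t_j>0$ with $\lambda_j t_j\ge1$ and $\varphi_{\lambda_j}(t_j)\to1$ (recall $\varphi_\lambda$ is real, since the rank~$1$ Weyl group contains $-1$). Passing to a subsequence according to the behaviour of $(\lambda_j,t_j)$ in $(0,\infty)^2$, one rules out each case. If $(\lambda_j,t_j)$ stays in a compact subset of $(0,\infty)^2$, the limit point $(\lambda_*,t_*)$ satisfies $\lambda_* t_*\ge1$ and $\varphi_{\lambda_*}(t_*)=1$, contradicting the strict inequality in (1). If $t_j\to\infty$, then $\varphi_{\lambda_j}(t_j)\to0$ by the exponential decay $|\varphi_\lambda(t)|\le C(1+t)e^{-\rho t}$ (uniform for $\lambda$ in compacts). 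If $\lambda_j\to\infty$, one uses the high-frequency behaviour of $\varphi_\lambda$: the uniform estimate $|\varphi_\lambda(t)|\le c\,(1+\lambda t)^{-\alpha-\frac12}$ forces $\varphi_{\lambda_j}(t_j)\to0$ whenever $\lambda_j t_j\to\infty$ (and whenever $t_j$ stays bounded away from $0$), while in the remaining scaling regime $\lambda_j t_j\to r\in(0,\infty)$ one invokes the contraction limit $\varphi_\lambda(r/\lambda)\to\mathcal{J}_\alpha(r)$ as $\lambda\to\infty$, where $\mathcal{J}_\alpha(r)=\Gamma(\alpha+1)(r/2)^{-\alpha}J_\alpha(r)$ is the spherical function of the flat tangent space and satisfies $\mathcal{J}_\alpha(r)<1$ for $r>0$ by the same positive-definiteness/analyticity argument as in (1). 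In every case the limit is $<1$, contradicting $\varphi_{\lambda_j}(t_j)\to1$; taking $C$ below the least of the finitely many positive gaps so produced gives the claim.

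The main obstacle, and the only place that needs genuine spherical-function theory (the Harish--Chandra series and $c$-function, equivalently the large-parameter asymptotics of Jacobi functions), is the high-frequency input of the last step: the decay of $\varphi_\lambda(t)$ as $\lambda t\to\infty$ and the Jacobi-to-Bessel contraction $\varphi_\lambda(r/\lambda)\to\mathcal{J}_\alpha(r)$. Everything else is either the elementary ODE manipulation of part (2) or a soft compactness argument. It is precisely the lack of exact scaling symmetry on $X$ that forces this case analysis rather than a one-line rescaling reduction to the Euclidean (Bessel) model, which is why the estimate is genuinely a statement about rank~$1$ geometry.
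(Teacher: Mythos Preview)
The paper does not prove Lemma~\ref{muno} at all: it is quoted verbatim from \cite[Lemmas 3.1--3.3]{pla1999} and used as a black box. So there is no ``paper's own proof'' to compare against; what you have written is strictly more than the paper attempts.

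As to the content of your sketch: parts (1) and (2) are correct and standard. The positive-definiteness argument for $|\varphi_\lambda|\le1$ and the two-integration Sturm--Liouville bound for $1-\varphi_\lambda(t)\le t^2(\lambda^2+\rho^2)$ are exactly how these facts are usually derived, and they are self-contained. Your case analysis for part (3) is also the right shape, and your diagnosis that the only substantive input is the high-frequency behaviour (the Jacobi--to--Bessel contraction $\varphi_\lambda(r/\lambda)\to\mathcal{J}_\alpha(r)$ and a uniform large-$\lambda t$ decay estimate) is accurate. One point to tighten: the uniform bound you invoke, $|\varphi_\lambda(t)|\le c(1+\lambda t)^{-\alpha-1/2}$, is not quite available in that unrestricted form; the correct uniform estimates (e.g.\ of Stanton--Tomas type) carry an $e^{-\rho t}$ factor and are typically stated for $\lambda$ bounded away from $0$, so in the mixed regime $\lambda_j\to\infty$, $t_j\to\infty$ you should cite the Harish--Chandra expansion (or the explicit Jacobi-function asymptotics) rather than that single inequality. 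With that caveat, your compactness/contradiction argument covers all limiting regimes and is a valid proof of (3).
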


The abstract Fourier transform on $X$ defined by \eqref{Fourier} satisfies a Plancherel formula on $X$ just like the classical Fourier transform:
\[\int_X |f(x)|^2 dx=\frac{1}{|W|}\int_{\mathfrak{a}_0^*\times B}|\widehat{f}(\lambda,b)|^2 d\mu(\lambda)db=\int_{\mathfrak{a}_{0,+}^*\times B}|\widehat{f}(\lambda,b)|^2 d\mu(\lambda)db,\]
where $d\mu(\lambda)=|c(\lambda)|^{-2}d\lambda$ denotes the measure used on $\mathfrak{a}_0^*$. Therefore, the continuous mapping $f\to \widehat{f}$ can be considered as an isomorphism from the space $L^2(X,dx)$ onto the space $L^2(\mathfrak{a}_{0,+}^*\times B,d\mu(\lambda)db)$. For more details about this, see \cite{gplan1,gplan2} and also \cite[Formula (1.9)]{platonov}.

The results of this paper will be concerned with the Fourier transform defined in \eqref{Fourier} above, and also with the translation operator defined in \eqref{toperator} below. The latter is essentially defined by taking averages of a function $f$ on sphere-like neighbourhoods. So, first it is necessary to define the framework for such neighbourhoods.

For any $x\in X$ and $t\in\mathbb{R}^+$, the sphere of radius $t$ in $X$ centred at $x$ is denoted by
\[\sigma(x;t)=\{y\in X: d(x,y)=t\},\]
where as usual $d(x,y)$ denotes the distance between the elements $x,y\in X$. This has an associated area element $d\sigma_x(y)$ used for integration, and overall surface area $|\sigma(t)|$ of the whole sphere. The translation operator $S^t$ is defined as follows, for functions $f\in C_0(X)$, the set of all continuous compactly-supported functions on $X$:
\begin{equation}\label{toperator}
(S^t f)(x)=\frac{1}{|\sigma(t)|}\int_{\sigma(x;t)}f(y)d\sigma_x(y),\quad t>0.
\end{equation}     
It can be proved that the operator $S^t$ is bounded from $C_0(X)$ to $L^2(X)$, see \cite[Lemma 2]{platonov} and consequences. In \cite[Lemma 3]{platonov}, it was shown that the translation operator, spherical function, and Fourier transform have the following interrelationship:
\begin{equation}
\label{TSFrelation}
\widehat{S^t f}(\lambda,b)=\varphi_{\lambda}(t)\widehat{f}(\lambda,b),\quad f\in L^2(X), t\in\mathbb{R}^+_0.
\end{equation}

\subsection{Moduli of continuity of $k$th order}

\begin{definition}
The function $\omega:I\subset\mathbb{R}\rightarrow[0,\infty)$ is called almost increasing if there exists a constant $C\geq 1$ such that $\omega(t)\leq C\omega(s)$ for all $t,s\in I$ with $t\leq s$. Moreover, it is called almost decreasing if there exists a constant $C\geq 1$ such that $\omega(t)\leq C\omega(s)$ for all $t,s\in I$ with $t\geq s$.
\end{definition}
\begin{definition}
Let $\delta_0\in(0,\infty)$ be a fixed real number and $k\in\mathbb{R}^+$. The function $\omega_k:[0,\delta_0]\to[0,\infty)$ is called a $k$th-order modulus of continuity if the following conditions hold:
\begin{enumerate}
\item\label{m1} $\omega_k(0)=0$ and $\omega_k(t)$ is continuous on $[0,\delta_0]$.
\item\label{m2} $\omega_k(t)$ is almost increasing on $t\in[0,\delta_0]$.
\item\label{m3} $\frac{\omega_k(t)}{t^k}$ is almost decreasing on $t\in[0,\delta_0]$.
\end{enumerate}
\end{definition}

It is important to note the following fact. Any $k$th-order modulus of continuity is also an $m$th-order modulus of continuity for all $m\geq k$: conditions \eqref{m1} and \eqref{m2} are the same, and dividing an almost decreasing function by a positive power of $t$ gives another almost decreasing function. This fact can be illustrated by considering power functions: for any given $k$, the function $\omega(t)=t^{\gamma}$ is a $k$th-order modulus of continuity whenever $0<\gamma<k$.

The following Zygmund type conditions of $k$th order will also frequently be used:
\begin{enumerate}
\item There exists a constant $C$ such that for all $t\in[0,\delta_0]$,
\begin{equation}\label{cond4}
\int_0^t\frac{\omega_k(x)}{x}\,\mathrm{d}x\leq C\omega_k(t).
\end{equation}
\item There exists a constant $C$ such that for all $t\in[0,\delta_0]$,
\begin{equation}\label{cond3}
\int_t^{\delta_0}\frac{\omega_k(x)}{x^{1+k}}\,\mathrm{d}x\leq C\frac{\omega_k(t)}{t^k}.
\end{equation}
\end{enumerate}
In the book \cite[Definition 2.9]{rafeirobook}, the notation $Z_k$ is used for functions satisfying \eqref{cond3}, and the condition \eqref{cond4} says that $\omega\in Z^0$ (in general $Z^{\beta}$ would be defined by replacing the integrand and right-hand side in \eqref{cond4} with those from \eqref{cond3}).

The Matuszewska-Orlicz type (MO) lower and upper indices of the function $\omega_k(t)$ is also related to these function spaces. The definitions are \cite{rafeirobook}:
\begin{align*}
m(\omega_k)&=\sup_{0<t<1}\frac{\log\left(\displaystyle\limsup_{\varepsilon\to0}\frac{\omega_k(\varepsilon t)}{\omega_k(\varepsilon)}\right)}{\log t}=\lim_{t\to0}\frac{\log\left(\displaystyle\limsup_{\varepsilon\to0}\frac{\omega_k(\varepsilon t)}{\omega_k(\varepsilon)}\right)}{\log t}, \\
M(\omega_k)&=\sup_{t>1}\frac{\log\left(\displaystyle\limsup_{\varepsilon\to0}\frac{\omega_k(\varepsilon t)}{\omega_k(\varepsilon)}\right)}{\log t}=\lim_{t\to\infty}\frac{\log\left(\displaystyle\limsup_{\varepsilon\to0}\frac{\omega_k(\varepsilon t)}{\omega_k(\varepsilon)}\right)}{\log t},
\end{align*}
and these are related to the Zygmund conditions and spaces by the following result \cite[Theorem 2.10]{rafeirobook}:
\begin{theorem}\label{rafeiro}
Let $\omega_k$ be a $k$th-order modulus of continuity as above, and $\delta\in\mathbb{R}$. Then $\omega_k\in Z^{\delta}$ if and only if $m(\omega_k)>\delta$, while $\omega_k\in Z_{\delta}$ if and only if $M(\omega_k)<\delta$. Moreover, we have 
\begin{align*}
m(\omega_k)=\sup\left\{\delta>0:\frac{\omega_k(t)}{t^{\delta}}\,\,\text{is almost increasing}\right\}, \\
M(\omega_k)=\inf\left\{\delta>0:\frac{\omega_k(t)}{t^{\delta}}\,\,\text{is almost decreasing}\right\}.
\end{align*}
\end{theorem}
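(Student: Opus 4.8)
The plan is to route everything through the function
\[
\Phi(t)=\limsup_{\varepsilon\to0^+}\frac{\omega_k(\varepsilon t)}{\omega_k(\varepsilon)},\qquad t>0,
\]
together with the finite-scale companions $\Psi_\eta(t)=\sup_{0<s\le\eta}\tfrac{\omega_k(st)}{\omega_k(s)}$, defined for $t\in(0,1]$ and $\eta\in(0,\delta_0]$. First I would record the elementary structural facts. Conditions \eqref{m1}--\eqref{m3} force $\omega_k>0$ on $(0,\delta_0]$ (otherwise $\omega_k\equiv0$ by \eqref{m2}--\eqref{m3}), so $\Phi$ and each $\Psi_\eta$ are finite and locally bounded: \eqref{m2} bounds them above for $t\le1$, while \eqref{m3} bounds $\Phi(t)$ by $Ct^k$ for $t\ge1$. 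Furthermore $\Phi$ is submultiplicative on $(0,\infty)$, each $\Psi_\eta$ is submultiplicative on $(0,1]$, and $\Phi(t)=\inf_{0<\eta\le\delta_0}\Psi_\eta(t)=\lim_{\eta\to0^+}\Psi_\eta(t)$ for $t\le1$. The classical lemma on submultiplicative functions (a multiplicative form of Fekete's subadditivity lemma, applicable because $\Phi$ is locally bounded) then gives that $\lim_{t\to0^+}\frac{\log\Phi(t)}{\log t}$ and $\lim_{t\to\infty}\frac{\log\Phi(t)}{\log t}$ exist and equal $\sup_{0<t<1}\frac{\log\Phi(t)}{\log t}$ and $\inf_{t>1}\frac{\log\Phi(t)}{\log t}$ respectively --- which is precisely the second equality in each of the definitions of $m(\omega_k)$ and $M(\omega_k)$.

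I would then prove $m(\omega_k)=\sup\{\delta>0:\omega_k(t)/t^\delta\text{ is almost increasing}\}$; the formula for $M(\omega_k)$ is symmetric, run as $t\to\infty$ with ``almost decreasing'' replacing ``almost increasing''. Since $\omega_k$ is continuous and strictly positive on $(0,\delta_0]$, ``$\omega_k(t)/t^\delta$ almost increasing on $[0,\delta_0]$'' is equivalent to the same property on some $[0,\eta]$, and rewriting the almost-monotone inequality with $a=st$, $b=s$ shows this is in turn equivalent to the estimate $\Psi_\eta(t)\le Ct^\delta$ for all $t\in(0,1]$. If such an estimate holds then $\Phi(t)\le\Psi_\eta(t)\le Ct^\delta$, whence $\frac{\log\Phi(t)}{\log t}\ge\delta+\frac{\log C}{\log t}\to\delta$ as $t\to0^+$, so $m(\omega_k)\ge\delta$; this gives the inequality ``$\ge$''. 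For ``$\le$'', fix $\delta<\delta_1<m(\omega_k)$; the Fekete lemma plus submultiplicativity of $\Phi$ yields $\Phi(t)\le C_1t^{\delta_1}$ on $(0,1]$. Choosing $q\in(0,1)$ with $2C_1q^{\delta_1}<1$ and, since $\Phi(q)=\inf_\eta\Psi_\eta(q)$, some $\eta_0$ with $\theta:=\Psi_{\eta_0}(q)<2C_1q^{\delta_1}<1$, submultiplicativity of $\Psi_{\eta_0}$ and \eqref{m2} give $\Psi_{\eta_0}(t)\le Ct^{\beta}$ on $(0,1]$ with $\beta=\log\theta/\log q$; since $\theta<2C_1q^{\delta_1}$ one has $\beta>\delta_1+\frac{\log(2C_1)}{\log q}$, and the right side exceeds $\delta$ once $q$ is small enough (as $\delta<\delta_1$). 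Thus $\omega_k(t)/t^{\beta}$, hence $\omega_k(t)/t^{\delta}$, is almost increasing, so $\delta$ belongs to the set in question; letting $\delta\uparrow m(\omega_k)$ finishes the argument.

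Finally I would link the indices to \eqref{cond4}/\eqref{cond3}. Assume $\delta\ge0$ (for $\delta<0$ both sides hold trivially). If $\omega_k(x)/x^{\delta'}$ is almost increasing for some $\delta'>\delta$, then $\omega_k(x)\le Cx^{\delta'}\omega_k(t)t^{-\delta'}$ for $x\le t$, and integrating $x^{\delta'-1-\delta}$ over $(0,t)$ gives $\omega_k\in Z^\delta$; combined with the previous paragraph, $m(\omega_k)>\delta$ implies $\omega_k\in Z^\delta$. Conversely let $\omega_k\in Z^\delta$, $g(t)=\omega_k(t)/t^\delta$, $G(t)=\int_0^t g(x)x^{-1}\,\mathrm{d}x$. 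Bounding $g$ below on $[s/2,s]$ via \eqref{m2} and \eqref{m3} gives $G(s)\ge c\,g(s)$, while \eqref{cond4} reads $G(t)\le Cg(t)$; hence $g$ and $G$ are comparable. Then $(\log G)'(t)=\frac{g(t)/t}{G(t)}\ge\frac{1}{Ct}$, so $G(s)/G(t)\le(s/t)^{1/C}$ for $s\le t$, whence $g(s)/g(t)\le C'(s/t)^{1/C}$, i.e. $\omega_k(t)/t^{\delta+1/C}$ is almost increasing and $m(\omega_k)\ge\delta+\tfrac1C>\delta$. The equivalence $\omega_k\in Z_\delta\Leftrightarrow M(\omega_k)<\delta$ is entirely parallel, integrating over $(t,\delta_0)$, using ``almost decreasing'', and running the differential comparison for $H(t)=\int_t^{\delta_0}g(x)x^{-1}\,\mathrm{d}x$ in the reverse direction. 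I expect the genuinely delicate step to be the one in the second paragraph --- reconciling the $\limsup$-type indices with the \emph{global} almost-monotonicity statements --- which is exactly what forces the auxiliary functions $\Psi_\eta$ and the careful calibration of the scale $q$ and cut-off $\eta_0$; once that is done, the differential-inequality bootstraps producing the \emph{strict} exponent gain in the Zygmund equivalences are routine.
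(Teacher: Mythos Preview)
The paper does not prove this statement at all: it is quoted verbatim as \cite[Theorem~2.10]{rafeirobook} and then used as a black box (only the implication ``$m(\omega_k)>0\Rightarrow\omega_k(t)/t^\delta$ almost increasing for some $\delta>0$'' is actually invoked, in the proof of Theorem~\ref{main2}). There is therefore no in-paper proof to compare your proposal against.

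That said, your proposal is a correct self-contained proof and follows the standard line one finds in the cited reference: submultiplicativity of the dilation quotient $\Phi$, Fekete's lemma to turn the $\sup/\inf$ definitions into honest limits, the finite-scale auxiliaries $\Psi_\eta$ to pass from the $\limsup$ defining $\Phi$ to a genuine global almost-monotonicity bound, and finally the logarithmic differential inequality $(\log G)'(t)\ge 1/(Ct)$ (resp.\ $(\log H)'(t)\le -1/(Ct)$) to extract a \emph{strict} exponent gain from each Zygmund condition. The only place requiring care is the order of quantifiers in your second paragraph --- fix $\delta_1$ and hence $C_1$ first, then take $q$ small, then choose $\eta_0$ depending on $q$ --- and you have this right. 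For the $Z_\delta$ direction the comparison $g\asymp H$ is only valid on, say, $(0,\delta_0/2]$ because $H(\delta_0)=0$, but extending the almost-decreasing conclusion to all of $[0,\delta_0]$ is immediate since $\omega_k$ is continuous and strictly positive on $[\delta_0/2,\delta_0]$.
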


If $\omega$ is a modulus of continuity and satisfies the extra Zygmund type conditions \eqref{cond4} and \eqref{cond3} then $\omega$ belongs to the so-called Bary--Stechkin class \cite{nata111}. Some classical examples of functions in the Bary--Stechkin class are $t^{\gamma}$, $t^{\gamma}\left(\log\frac1{t}\right)^{\lambda}$, and $t^{\gamma}\left(\log\log\frac1{t}\right)^{\lambda}$, where $\gamma\in(0,1)$ and $\lambda\in\mathbb{R}$. These examples can be adapted easily to the $k$th-order case, simply replacing the condition $\gamma\in(0,1)$ by the more general condition $\gamma\in(0,k)$.  

The most important behaviour of the functions above is near zero. But, when it is necessary to do some estimations on these functions over $[\delta_0,\infty)$, it will be assumed that $\omega_k(t)$ is bounded below on $[\delta_0,\infty)$ by a positive number, and that $\omega_k(t)^2/t^{5}\in L^1\big([\delta_0,\infty)\big)$, without loss of generality of the results.

\section{Main results}\label{mainre}

This section begins by introducing the space to be considered in this paper.

\begin{definition}
Let $\omega_k$ be a $k$th-order modulus of continuity. Let $X$ be a Riemannian symmetric space of rank $1$ with $n=\dim X$. A function $f\in L^2(X)$ is said to be in the generalised Lipschitz class $\text{Lip}(\omega_k)$ if there exists a constant $C>0$ such that for all sufficiently small $t>0$,
$$ \|S^t f-f\|_{L^2(X)}\leq C\omega_k(t). $$
\end{definition}

Below is one of the main results of this paper, an estimate of the Fourier transform of any function in $\text{Lip}(\omega_k)$, or in other words a necessary condition in terms of Fourier transforms for a function to be in this generalised Lipschitz class.

\begin{theorem}\label{main1}
Let $\omega_k$ be a $k$th-order modulus of continuity. Let $X$ be a Riemannian symmetric space of rank $1$ with $n=\dim X$.  If $f\in \text{Lip}(\omega_k)$, then there exists a constant $C>0$ such that for all sufficiently small $t>0$,
\begin{equation}\label{pla1.13}
\int_{1/t}^{+\infty}\int_{B}|\widehat{f}(\lambda,b)|^2 d\lambda db\leq C\omega_k(t)^2t^{n-1}. 
\end{equation}
\end{theorem}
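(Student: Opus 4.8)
The plan is to run the classical Titchmarsh argument, transplanted to $X$ by means of the Plancherel theorem, the eigenrelation \eqref{TSFrelation}, and the estimates in Lemma \ref{muno}. First I would apply the Plancherel formula on $X$ together with \eqref{TSFrelation} to obtain
\[
\|S^t f - f\|_{L^2(X)}^2 = \int_{\mathfrak{a}_{0,+}^*\times B}\bigl|\varphi_\lambda(t)-1\bigr|^2\,|\widehat f(\lambda,b)|^2\,|c(\lambda)|^{-2}\,d\lambda\,db;
\]
since $f\in\text{Lip}(\omega_k)$, the left-hand side here is at most $C\omega_k(t)^2$ for all sufficiently small $t>0$. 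Next I would restrict the integration to the tail $\lambda\geq 1/t$: by Lemma \ref{muno}(1) the factor $1-\varphi_\lambda(t)$ is nonnegative, and by Lemma \ref{muno}(3) there is an absolute constant $c_0>0$ with $1-\varphi_\lambda(t)\geq c_0$ once $\lambda\geq 1/t$, so $|\varphi_\lambda(t)-1|^2\geq c_0^2$ on that range, whence
\[
c_0^2\int_{1/t}^{\infty}\int_B|\widehat f(\lambda,b)|^2\,|c(\lambda)|^{-2}\,d\lambda\,db \leq \|S^t f - f\|_{L^2(X)}^2 \leq C\omega_k(t)^2.
\]

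The remaining task is to convert the Plancherel measure $|c(\lambda)|^{-2}\,d\lambda$ into the Euclidean measure $d\lambda$ appearing in \eqref{pla1.13}. For this I would invoke the known growth of the Harish-Chandra $c$-function on a rank $1$ symmetric space: there exist constants $c_1>0$ and $\lambda_0>0$, depending only on $X$, such that $|c(\lambda)|^{-2}\geq c_1\lambda^{n-1}$ for all $\lambda\geq\lambda_0$, where $n=\dim X$ (see e.g. \cite{pla1999,platonov}). Then, for $t$ small enough that $1/t\geq\lambda_0$, every $\lambda$ in the region of integration satisfies $\lambda^{n-1}\geq t^{-(n-1)}$, hence $|c(\lambda)|^{-2}\geq c_1 t^{-(n-1)}$ there; substituting this pointwise lower bound into the previous display gives
\[
c_0^2\,c_1\,t^{-(n-1)}\int_{1/t}^{\infty}\int_B|\widehat f(\lambda,b)|^2\,d\lambda\,db \leq C\omega_k(t)^2,
\]
and rearranging yields \eqref{pla1.13} with constant $C/(c_0^2 c_1)$, valid for all $t>0$ small enough that the Lipschitz bound, the threshold in Lemma \ref{muno}(3), and the inequality $1/t\geq\lambda_0$ all hold simultaneously.

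Once these two ingredients are in hand the argument is just a short chain of inequalities, so I expect the only genuine obstacle to be the lower bound $|c(\lambda)|^{-2}\gtrsim\lambda^{n-1}$ as $\lambda\to\infty$; for rank $1$ spaces this is standard, the $c$-function being an explicit ratio of Gamma functions, and beyond that the only thing needing attention is bookkeeping which of the several smallness conditions on $t$ is binding. I note in passing that the $k$th-order structure of $\omega_k$ is not used in this direction: only the inequality $\|S^t f - f\|_{L^2(X)}\leq C\omega_k(t)$ enters, so the same argument applies verbatim with $\omega_k$ replaced by an arbitrary nonnegative function.
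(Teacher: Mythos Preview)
Your proposal is correct and follows essentially the same route as the paper: Plancherel plus \eqref{TSFrelation} to express $\|S^tf-f\|_{L^2(X)}^2$ as a weighted integral of $|1-\varphi_\lambda(t)|^2$, Lemma~\ref{muno}(3) to strip that factor on the tail $\lambda\geq 1/t$, and the asymptotic $|c(\lambda)|^{-2}\asymp\lambda^{n-1}$ together with $\lambda\geq 1/t$ to pass from the Plancherel measure to $d\lambda$ at the cost of a factor $t^{n-1}$. Your closing observation that the $k$th-order structure of $\omega_k$ plays no role in this direction is also correct and worth noting.
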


\begin{proof}
Notice that \cite[Formula (3.2)]{platonov}, which follows from the Plancherel formula and the identity \eqref{TSFrelation}, gives the following: 
\begin{equation}\label{pla3.2}
\|S^t f-f\|_{L^2(X)}^2=\int_X \big|S^t f(x)-f(x)\big|^2 dx=\int_{0}^{+\infty}|1-\varphi_\lambda(t)|^2 H(\lambda)d\mu(\lambda),
\end{equation}
where 
\[H(\lambda)=\int_B |\widehat{f}(\lambda,b)|^2 db\quad\text{and}\quad d\mu(\lambda)=|c(\lambda)|^{-2}d\lambda.\]
By the representation \eqref{pla3.2}, part 3 of Lemma \ref{muno}, and the fact that $f\in \text{Lip}(\omega_k)$, we have
\begin{align}\label{newuf}
\int_{1/t}^{+\infty}H(\lambda)d\mu(\lambda)&\leq\frac{1}{C_1^2} \int_{1/t}^{+\infty}|1-\varphi_\lambda(t)|^2 H(\lambda)d\mu(\lambda)\nonumber \\
&\leq\frac{1}{C_1^2}\int_{0}^{+\infty}|1-\varphi_\lambda(t)|^2 H(\lambda)d\mu(\lambda) \leq C_2\omega_k(t)^2.
\end{align}
Besides, by \cite[formula (3.6)]{platonov} it is known that  
\begin{equation}\label{pla3.6}
|c(\lambda)|^{-2}\approx \lambda^{n-1}.
\end{equation}
Therefore, by \eqref{newuf}, \eqref{pla3.6} and $\lambda\geq1/t$ it follows that
\begin{align*}
\int_{1/t}^{+\infty}\int_{B}&|\widehat{f}(\lambda,b)|^2 d\lambda db=\int_{1/t}^{+\infty}H(\lambda)d\lambda=t^{n-1}\int_{1/t}^{+\infty}H(\lambda)\left(\frac{1}{t}\right)^{n-1}d\lambda \\
&\leq t^{n-1}\int_{1/t}^{+\infty}H(\lambda)\lambda^{n-1}d\lambda\approx t^{n-1}\int_{1/t}^{+\infty}H(\lambda)d\mu(\lambda)\leq C_3t^{n-1}\omega_k(t)^2.
\end{align*}
\end{proof}

For the converse statement of Theorem \ref{main1}, it is necessary to add an extra assumption on $k$. The result is given by the following theorem.

\begin{theorem}\label{main2}
Let $k\leq2$, and let $\omega_k$ be a modulus of continuity of order $k$, satisfying the Zygmund conditions \eqref{cond4} and \eqref{cond3}. It is also assumed that $\omega_k(t)$ is bounded below on $[\delta_0,\infty)$ by a positive number, and that $\omega_k(t)^2/t^{5}\in L^1\big([\delta_0,\infty)\big)$. Let $X$ be a Riemannian symmetric space of rank $1$ with $n=\dim X$. If there exists a constant $C>0$ such that for all sufficiently small $t>0$ the condition \eqref{pla1.13}, then $f\in \text{Lip}(\omega_k)$.
\end{theorem}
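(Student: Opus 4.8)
The plan is to follow the Titchmarsh--Platonov scheme. Starting from the identity \eqref{pla3.2}, the claim reduces to showing
\[\int_{0}^{+\infty}|1-\varphi_\lambda(t)|^2 H(\lambda)\,d\mu(\lambda)\leq C\omega_k(t)^2\]
for all sufficiently small $t>0$, where $H(\lambda)=\int_B|\widehat{f}(\lambda,b)|^2\,db$ and $d\mu(\lambda)=|c(\lambda)|^{-2}d\lambda\approx\lambda^{n-1}d\lambda$ by \eqref{pla3.6}. I would split this integral over the three ranges $\lambda\in(0,1)$, $\lambda\in(1,1/t)$ and $\lambda\in(1/t,+\infty)$, with contributions $J_1$, $J_2$, $J_3$, taking $t$ small enough that $1/t>1$ and that \eqref{pla1.13} is available on $(1/t,+\infty)$. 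The trivial case $\omega_k\equiv0$ is disposed of separately, so that almost-monotonicity of $\omega_k(t)/t^k$ gives $\omega_k(t)\geq c_0 t^k$ on $(0,\delta_0]$ for some $c_0>0$.

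For $J_1$ I would use only the crude bound $|1-\varphi_\lambda(t)|^2\leq(t^2(\lambda^2+\rho^2))^2\leq Ct^4$ for $\lambda\leq1$ (part 2 of Lemma \ref{muno}) together with $\int_0^1 H\,d\mu\leq\|f\|_{L^2(X)}^2$, giving $J_1\leq Ct^4$; since $k\leq2$, we have $t^4\leq t^{2k}\leq c_0^{-2}\omega_k(t)^2$ for small $t$, so $J_1\leq C\omega_k(t)^2$. This is exactly where $k\leq2$ enters, the low-frequency decay of $1-\varphi_\lambda(t)$ being only $O(t^2)$. For $J_3$ I would use $|1-\varphi_\lambda(t)|\leq2$ and reduce (via $d\mu\approx\lambda^{n-1}d\lambda$) to estimating $\int_{1/t}^{+\infty}H(\lambda)\lambda^{n-1}\,d\lambda$; with $G(\lambda)=\int_\lambda^{+\infty}H(s)\,ds$, finite for $\lambda>0$ since $H$ is $d\mu$-integrable, an integration by parts yields $t^{-(n-1)}G(1/t)+(n-1)\int_{1/t}^{+\infty}\lambda^{n-2}G(\lambda)\,d\lambda$, the boundary term at infinity vanishing because $G(\lambda)\leq C\omega_k(1/\lambda)^2\lambda^{-(n-1)}$ (this being \eqref{pla1.13} with $t$ replaced by $1/\lambda$) and $\omega_k(0)=0$. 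The first term is $\leq C\omega_k(t)^2$ by \eqref{pla1.13}, and inserting the bound for $G$ into the second term and substituting $s=1/\lambda$ converts it into $C\int_0^t\frac{\omega_k(s)^2}{s}\,ds$.

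For $J_2$ I would use the sharper consequence $|1-\varphi_\lambda(t)|^2\leq(t^2(\lambda^2+\rho^2))^2\leq Ct^4\lambda^4$ for $\lambda\geq1$, reducing to $Ct^4\int_1^{1/t}\lambda^{n+3}H(\lambda)\,d\lambda$; integrating by parts again — this time splitting the resulting $G$-integral at a fixed point below which only the crude bound $G(\lambda)\leq C\|f\|_{L^2(X)}^2$ is available — reduces $J_2$ to terms of size $Ct^4\|f\|_{L^2(X)}^2\leq C\omega_k(t)^2$ plus $Ct^4\int_t^{\delta_0}\frac{\omega_k(s)^2}{s^5}\,ds$ after substituting $s=1/\lambda$. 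It then remains to prove $\int_0^t\frac{\omega_k(s)^2}{s}\,ds\leq C\omega_k(t)^2$ and $\int_t^{\delta_0}\frac{\omega_k(s)^2}{s^5}\,ds\leq C\frac{\omega_k(t)^2}{t^4}$, i.e.\ that $\omega_k^2\in Z^0$ and $\omega_k^2\in Z_4$. Since $\omega_k^2$ is a modulus of continuity of order $2k\leq4$, and since $\omega(t)/t^\delta$ is almost increasing (resp.\ almost decreasing) if and only if $\omega(t)^2/t^{2\delta}$ is, the Matuszewska--Orlicz indices obey $m(\omega_k^2)=2m(\omega_k)$ and $M(\omega_k^2)=2M(\omega_k)$; by Theorem \ref{rafeiro} the hypotheses \eqref{cond4} and \eqref{cond3} are equivalent to $m(\omega_k)>0$ and $M(\omega_k)<k$, so $m(\omega_k^2)>0$ and $M(\omega_k^2)=2M(\omega_k)<2k\leq4$, whence $\omega_k^2\in Z^0\cap Z_4$ by Theorem \ref{rafeiro} again. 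Adding the three bounds gives $\|S^t f-f\|_{L^2(X)}^2=J_1+J_2+J_3\leq C\omega_k(t)^2$, i.e.\ $f\in\text{Lip}(\omega_k)$.

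The part I expect to require the most care is the two integrations by parts and the attendant bookkeeping: carrying the weight $d\mu(\lambda)\approx\lambda^{n-1}d\lambda$ against the weaker ($d\lambda$-based) hypothesis \eqref{pla1.13}, quarantining the range near $\lambda=0$ where $H$ need not be $d\lambda$-integrable, and checking that the powers of $t$ match up exactly — which is where both $k\leq2$ and the \emph{strict} inequality $M(\omega_k)<k$ (i.e.\ assumption \eqref{cond3}) are indispensable.
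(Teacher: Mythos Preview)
Your argument is correct and follows the same Titchmarsh--Platonov skeleton as the paper (Plancherel identity \eqref{pla3.2}, a splitting of the spectral integral, Lemma \ref{muno} on each piece, and closure via the Zygmund conditions), but the tactical choices differ in two places worth recording. For the tail $\lambda>1/t$ the paper upgrades the unweighted hypothesis \eqref{pla1.13} to the weighted estimate $\int_{1/t}^{\infty}H\,d\mu\leq C\omega_k(t)^2$ by a dyadic sum $\sum_{j\geq0}\omega_k(t/2^j)^2$, controlled using the almost-increase of $\omega_k(t)/t^{\delta}$ for some $0<\delta<m(\omega_k)$; you instead integrate by parts against the unweighted $G(\lambda)=\int_\lambda^{\infty}H\,ds$ and arrive directly at $\int_0^{t}\omega_k(s)^2\,s^{-1}\,ds$, i.e.\ the $Z^0$ condition for $\omega_k^2$. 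For the middle range the paper factors $\omega_k(u)^2/u^{5}=\big(\omega_k(u)/u^{4-k}\big)\cdot\big(\omega_k(u)/u^{k+1}\big)$, pulls out the first factor by almost-decrease (this is where $4-k\geq k$, i.e.\ $k\leq2$, is used) and then applies \eqref{cond3}; you instead observe $M(\omega_k^2)=2M(\omega_k)<2k\leq4$ and invoke Theorem \ref{rafeiro} to get $\omega_k^2\in Z_4$ directly. A pleasant side effect of your bookkeeping (splitting the $G$-integral at a fixed threshold rather than extending the bound on $\phi$ to all $t>0$) is that you never actually call on the two auxiliary hypotheses on $\omega_k$ over $[\delta_0,\infty)$; in the paper these are used precisely to extend \eqref{pla3.14} to all $t$ and then to discard the tail $\int_{\delta_0}^{\infty}\omega_k(u)^2 u^{-5}\,du$.
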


\begin{proof}
With the same notation $H(\lambda)$ as used in the proof of Theorem \ref{main1}, the condition \eqref{pla1.13} implies
\[\int_{1/t}^{2/t}H(\lambda)\lambda^{n-1}d\lambda\leq\left(\frac{2}{t}\right)^{n-1}\int_{1/t}^{2/t}H(\lambda)d\lambda\leq\left(\frac{2}{t}\right)^{n-1}\int_{1/t}^{+\infty}H(\lambda)d\lambda\leq C_1\omega_k(t)^2.\]
Hence, substituting $t/2^j$ for $t$ and then combining the results for different $j$,
\begin{align*}
\int_{1/t}^{+\infty}H(\lambda)\lambda^{n-1}d\lambda&=\sum_{j=0}^{+\infty}\int_{2^{j}/t}^{2^{j+1}/t}H(\lambda)\lambda^{n-1}d\lambda\leq C_1\sum_{j=0}^{+\infty}\omega_k\left(\frac{t}{2^j}\right)^2 \\
&\leq C_2\frac{\omega_k(t)^2}{t^{2\delta}}\sum_{j=0}^{+\infty}\left(\frac{t}{2^j}\right)^{2\delta}\leq C\omega_k(t)^2,  
\end{align*}
where $\delta>0$ is less than the MO index $m(\omega)$, so that $\frac{\omega(t)}{t^{\delta}}$ is almost increasing by Theorem \ref{rafeiro}. Consequently, using \eqref{pla3.6}, the following is obtained:
\begin{equation}\label{pla3.14}
\int_{1/t}^{+\infty}H(\lambda)d\mu(\lambda)\leq C_3\omega_k(t)^2.
\end{equation}
The integral on the right-hand side of \eqref{pla3.2} can be split as follows:
\[\|S^t f-f\|_{L^2(X)}^2=J_1+J_2,\]
where 
\[J_1=\int_{0}^{1/t}|1-\varphi_\lambda(t)|^2 H(\lambda)d\mu(\lambda),\quad\quad J_2=\int_{1/t}^{+\infty}|1-\varphi_\lambda(t)|^2 H(\lambda)d\mu(\lambda).\]
The second term $J_2$ can be estimated using \eqref{pla3.14} and the first part of Lemma \ref{muno}:
\[J_2\leq 4\int_{1/t}^{+\infty}H(\lambda)d\mu(\lambda)\leq C_3\omega_k(t)^2.\]
For the first term $J_1$, use the second part of Lemma \ref{muno}:
\[J_1\leq C_4t^4 \int_{0}^{1/t}(\lambda^4+\rho^4) H(\lambda)d\mu(\lambda)= K_1+K_2,\]
where
\[K_1=C_4t^4 \int_{0}^{1/t}\lambda^4 H(\lambda)d\mu(\lambda),\quad\quad K_2=C_4t^4 \rho^4 \int_{0}^{1/t}H(\lambda)d\mu(\lambda).\]
Here the second term $K_2$ can be estimated using Plancherel's theorem:
\[K_2\leq C_4t^4 \rho^4 \int_{0}^{+\infty}H(\lambda)d\mu(\lambda)\leq C_4t^4 \rho^4 \|f\|_2^2 \leq C_5\omega_k(t)^2,\]
since $t^2\leq C\omega_k(t)$. Here the assumption $k\leq2$ is used. It remains only to estimate $K_1$.

The formula \eqref{pla3.14} is assumed to hold for all sufficiently small $t>0$. But, by the Plancherel formula and the fact that $\omega_k$ is bounded below on any closed interval in the positive reals, it can equally well be assumed that \eqref{pla3.14} holds for all $t\in\mathbb{R}^+$. Using the latter fact and setting $\phi(t)=\int_{t}^{+\infty}H(\lambda)\,\mathrm{d}\mu(\lambda)$ for any $t>0$, the estimating of $K_1$ can be done as follows:
\begin{align*}
K_1&=C_4t^4\int_{0}^{1/t} (-s^4\phi^{\prime}(s))\,\mathrm{d}s=C_4t^4\left(-\frac{1}{t^4}\phi\left(\frac{1}{t}\right)+4\int_0^{1/t}s^3\phi(s)\,\mathrm{d}s\right) \\
&\leq 4C_4t^{4}\int_0^{1/t}s^3\phi(s)\,\mathrm{d}s\leq C_5t^4 \int_0^{1/t} s^3\omega_k(1/s)^2\,\mathrm{d}s=C_5t^4 \int_{t}^{+\infty}\frac{\omega_k(u)^2}{u^5}\,\mathrm{d}u \\
&=C_5t^4\left(\int_{t}^{\delta_0}\frac{\omega_k(u)^2}{u^5}\,\mathrm{d}u+\int_{\delta_0}^{+\infty}\frac{\omega_k(u)^2}{u^5}\,\mathrm{d}u\right) \\
&\leq C_6\left(t^4\cdot\frac{\omega_k(t)}{t^{4-k}}\int_{t}^{\delta_0}\frac{\omega_k(u)}{u^{k+1}}\,\mathrm{d}u+t^4\right)\leq C\omega_k(t)^2,
\end{align*}
where in the last two lines the following were used: the fact that $\omega_k(t)/t^{4-k}$ is almost decreasing (since $4-k\geq k$ because $k\leq2$), the assumption $\omega_k(t)^2/t^5\in L^1[\delta_0,+\infty)$, the Zygmund condition \eqref{cond3}, and the fact that $t^2\leq C\omega_k(t)$ since $k\leq2$.

So finally it is established that $\|S^t f-f\|_{L^2(X)}\leq C\omega_k(t).$
\end{proof}

Combining Theorem \ref{main1} and Theorem \ref{main2} gives the following equivalence relation between bounding conditions on the translation operator and on the Fourier transform.

\begin{theorem} \label{main3}
Let $X$ be a Riemannian symmetric space of rank $1$ with $n=\dim X$. Let $\omega_k$ be a $k$th-order modulus of continuity, with $k\leq2$, satisfying the Zygmund conditions \eqref{cond4} and \eqref{cond3}. It is also assumed that $\omega_k(t)$ is bounded below on $[\delta_0,\infty)$ by a positive number, and that $\omega_k(t)^2/t^{5}\in L^1\big([\delta_0,\infty)\big)$. If $f$ is a function in $L^2(X)$, then the following statements are equivalent.
\begin{enumerate}
\item\label{1} There exists a constant $C>0$ such that for all sufficiently small $t>0$,
\[ \|S^t f-f\|_{L^2(X)}\leq C\omega_k(t). \]
\item\label{2} There exists a constant $C>0$ such that for all sufficiently small $t>0$,
\[\int_{1/t}^{+\infty}\int_{B}|\widehat{f}(\lambda,b)|^2\,\mathrm{d}\lambda\,\mathrm{d}b\leq C\omega_k(t)^2t^{n-1}.\]
\end{enumerate}
\end{theorem}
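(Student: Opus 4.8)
The plan is to obtain Theorem \ref{main3} as an immediate consequence of Theorems \ref{main1} and \ref{main2}, since statement \ref{1} is literally the condition defining membership in $\text{Lip}(\omega_k)$ and statement \ref{2} is literally the Fourier-transform bound \eqref{pla1.13}. First I would note that the hypotheses collected in Theorem \ref{main3} --- namely that $X$ is a rank-$1$ Riemannian symmetric space, that $k\leq 2$, that $\omega_k$ is a $k$th-order modulus of continuity satisfying the Zygmund conditions \eqref{cond4} and \eqref{cond3}, and that $\omega_k$ is bounded below on $[\delta_0,\infty)$ with $\omega_k(t)^2/t^5\in L^1([\delta_0,\infty))$ --- contain, in particular, all the hypotheses needed to invoke Theorem \ref{main1} (which requires only that $\omega_k$ be a $k$th-order modulus of continuity and that $X$ have rank $1$) as well as all those needed to invoke Theorem \ref{main2}.

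For the forward implication \ref{1} $\Rightarrow$ \ref{2}, I would argue as follows: assuming \ref{1}, the function $f$ belongs to $\text{Lip}(\omega_k)$ by the very definition of that class, so Theorem \ref{main1} applies and yields exactly the inequality in \ref{2}, possibly with a different value of the constant $C$. For the converse \ref{2} $\Rightarrow$ \ref{1}, assuming \ref{2} means that the bound \eqref{pla1.13} holds for all sufficiently small $t>0$; since the additional standing assumptions on $k$ and on $\omega_k$ are precisely those required by Theorem \ref{main2}, that theorem applies and gives $f\in\text{Lip}(\omega_k)$, which is statement \ref{1}.

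There is no genuine obstacle in this final step: the only bookkeeping concerns the constants, which change from one application to the next but remain harmless because each statement asserts only the \emph{existence} of a suitable constant. The substantive content of the equivalence lies entirely within the proofs of Theorems \ref{main1} and \ref{main2} --- in particular in the chaining argument over the dyadic scales $t/2^j$ controlled by the Matuszewska-Orlicz index $m(\omega_k)$ via Theorem \ref{rafeiro}, and in the delicate estimate of the term $K_1$, where the assumption $k\leq 2$ (which makes $\omega_k(t)/t^{4-k}$ almost decreasing and forces $t^2\leq C\omega_k(t)$) together with the Zygmund condition \eqref{cond3} is exactly what makes the argument close.
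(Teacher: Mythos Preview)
Your proposal is correct and matches the paper's own proof, which simply states that both directions of the equivalence are established in Theorems \ref{main1} and \ref{main2} respectively. Your additional verification that the hypotheses of Theorem \ref{main3} subsume those of both cited theorems, and your remarks about the constants and the substantive ingredients, are accurate elaborations but not required.
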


\begin{proof}
Both directions of the equivalence are proved in Theorems \ref{main1}, \ref{main2} respectively.
\end{proof}

As an example of the above results, consider the function $\omega(t)=t^{\alpha}$ for any $0<\alpha<2$ and $t\in[0,\delta_0]$. This is a $2$nd-order modulus of continuity ($k=2$). To ensure the appropriate conditions on $[\delta_0,+\infty)$, assume that $\omega(t)\equiv W$ for any $t\geq \delta_0$ and some constant $W>0$. Then the following result is obtained as a corollary of Theorem \ref{main3}.

\begin{corollary} \label{cor1}
Let $X$ be a Riemannian symmetric space of rank $1$ with $n=\dim X$, and let $0<\alpha<2$. If $f$ is a function in $L^2(X)$, then the following assertions are equivalent:
\begin{enumerate}
\item There exists a constant $C>0$ such that for all sufficiently small $t>0$,
\[ \|S^t f-f\|_{L^2(X)}\leq Ct^\alpha. \]
\item There exists a constant $C>0$ such that for all sufficiently small $t>0$,
\[\int_{1/t}^{+\infty}\int_{B}|\widehat{f}(\lambda,b)|^2 \,\mathrm{d}\lambda\,\mathrm{d}b\leq Ct^{2\alpha+n-1}.\]
\end{enumerate}
\end{corollary}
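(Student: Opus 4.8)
The plan is to obtain Corollary \ref{cor1} directly from Theorem \ref{main3}, so that the only work is to verify that the concrete choice $\omega(t)=t^\alpha$ on $[0,\delta_0]$, extended by the constant $W$ on $[\delta_0,\infty)$, satisfies every hypothesis of that theorem with $k=2$. Once this is done, the conclusion of Theorem \ref{main3} is precisely the stated equivalence, since $\omega_k(t)=t^\alpha$ and hence $\omega_k(t)^2t^{n-1}=t^{2\alpha+n-1}$.

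First I would check that $\omega$ is a $2$nd-order modulus of continuity: conditions \eqref{m1} and \eqref{m2} are immediate because $t\mapsto t^\alpha$ vanishes at $t=0$, is continuous, and is increasing; condition \eqref{m3} holds because $\omega(t)/t^2=t^{\alpha-2}$ is (strictly) decreasing when $\alpha<2$. With $k=2$ the requirement $k\leq 2$ of Theorem \ref{main3} is met. Next I would verify the two Zygmund conditions. For \eqref{cond4}, $\int_0^t x^{\alpha-1}\,\mathrm{d}x=t^\alpha/\alpha\leq C\omega(t)$ with $C=1/\alpha$, finite since $\alpha>0$. For \eqref{cond3}, $\int_t^{\delta_0}x^{\alpha-1-k}\,\mathrm{d}x=\int_t^{\delta_0}x^{\alpha-3}\,\mathrm{d}x=\frac{t^{\alpha-2}-\delta_0^{\alpha-2}}{2-\alpha}\leq\frac{1}{2-\alpha}\,t^{\alpha-2}=\frac{1}{2-\alpha}\cdot\frac{\omega(t)}{t^k}$, where $\alpha<2$ is used both to guarantee $2-\alpha>0$ and to identify $t^{\alpha-2}$ with $\omega(t)/t^k$.

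Finally I would dispose of the behaviour on $[\delta_0,\infty)$: with $\omega\equiv W$ there, $\omega$ is trivially bounded below by the positive constant $W$, and $\omega(t)^2/t^5=W^2/t^5$ is integrable on $[\delta_0,\infty)$. All hypotheses of Theorem \ref{main3} being in place, the corollary would follow at once, the two displayed conditions being the specialisations of statements \ref{1} and \ref{2} of that theorem.

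I do not expect any genuine obstacle: the argument is pure verification. The one place that calls for a little care is the sign bookkeeping in the estimate for \eqref{cond3}, where the exponent $\alpha-2$ is negative, so one must note that the antiderivative is controlled by the term $t^{\alpha-2}$ and not by the fixed, and smaller, boundary term $\delta_0^{\alpha-2}$.
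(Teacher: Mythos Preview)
Your proposal is correct and follows exactly the route the paper takes: the corollary is deduced from Theorem \ref{main3} by specialising to $\omega(t)=t^\alpha$ with $k=2$ and extending by a constant on $[\delta_0,\infty)$. Your write-up simply makes explicit the routine verifications that the paper leaves to the reader.
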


\section{Special cases on $\mathbb{R}^n$}\label{scsection}

The symmetric spaces considered in this paper and the Euclidean spaces $\mathbb{R}^n$ ($n\geq1$) belong to the noncompact two-point homogeneous spaces \cite{5pla}. Thus, similar results as those given in Section \ref{mainre} can be established for the space $\mathbb{R}^n$. 

The following notation will be used. The usual inner product and norm on $\mathbb{R}^n$ are given by $\langle x,y\rangle=x_1 y_1+\cdots+x_n y_n,$ $|x|=\sqrt{\langle x,x\rangle},$ for any $x=(x_1,\ldots,x_n)$, $y=(y_1,\ldots,y_n)$ in $\mathbb{R}^n$. The Fourier transform on $\mathbb{R}^n$ is defined as 
\[\widehat{f}(y)=\frac{1}{(2\pi)^{n/2}}\int_{\mathbb{R}^n}f(x)e^{-i\langle y,x\rangle}\,\mathrm{d}x,\quad y\in\mathbb{R}^n,\]
for any $f\in C_0(\mathbb{R}^n)$. Moreover, by continuity the Fourier transform can be extended to the Hilbert space $L^2(\mathbb{R}^n)$. 

Let $\sigma$ be the unit sphere on $\mathbb{R}^n$, with $dx$ denoting the $(n-1)$-dimensional area element of the sphere and $|\sigma|$ the hypersurface area of the whole sphere. For $f\in C_0(\mathbb{R}^n)$ the translation operator $S^t$ is defined as  
\begin{equation}\label{toperatorp}
(S^t f)(x)=\frac{1}{|\sigma|}\int_{\sigma}f(x+ty)\,\mathrm{d}y,\quad t\geq 0.
\end{equation}     
This operator is also called the spherical mean operator. By continuity the operator $S^t$ can be extended to the Hilbert space $L^2(\mathbb{R}^n)$. 

Let $\omega_k$ be a $k$th-order modulus of continuity. A function $f\in L^2(\mathbb{R}^n)$ is said to be in the spherical Lipschitz class $\text{Lip}^s_{\mathbb{R}^n}(\omega_2)$ if there exists a constant $C>0$ such that for all sufficiently small $t>0$,
\[\|S^t f-f\|_{L^2(\mathbb{R}^n)}\leq C\omega_2(t).\]

The proof of the next results follows similarly as those given in Theorems \ref{main1}, \ref{main2}, and the overall equivalence condition of Theorem \ref{main3}.

\begin{theorem}\label{main1sc}
Let $\omega_k$ be a $k$th-order modulus of continuity. It is also assumed that $\omega_k(t)$ is bounded below on $[\delta_0,\infty)$ by a positive number, and that $\omega_k(t)^2/t^{5}\in L^1\big([\delta_0,\infty)\big)$. If $f\in \text{Lip}^s_{\mathbb{R}^n}(\omega_k)$, then there exists a constant $C>0$ such that for all sufficiently small $t>0$,
\begin{equation}\label{pla1.13sc}
\int_{1/t}^{+\infty}\int_{\sigma}|\widehat{f}(\lambda x)|^2\,\mathrm{d}\lambda\,\mathrm{d}x\leq C\omega_k(t)^2t^{n-1}. 
\end{equation}
\end{theorem}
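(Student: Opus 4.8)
The plan is to mirror the structure of the proof of Theorem \ref{main1}, using the Euclidean analogue of the identity \eqref{pla3.2} together with the classical estimates for the spherical function on $\mathbb{R}^n$. Recall that on $\mathbb{R}^n$ the Fourier transform diagonalises the spherical mean operator in the same way as \eqref{TSFrelation}: one has $\widehat{S^t f}(y) = j_{n/2-1}(t|y|)\,\widehat{f}(y)$, where $j_{\nu}(s) = 2^{\nu}\Gamma(\nu+1)s^{-\nu}J_{\nu}(s)$ is the normalised Bessel function, which plays the role of the spherical function $\varphi_{\lambda}(t)$ here. First I would pass to polar coordinates $y = \lambda x$ with $\lambda = |y| > 0$ and $x \in \sigma$, so that by Plancherel
\[
\|S^t f - f\|_{L^2(\mathbb{R}^n)}^2 = \int_0^{+\infty}\int_{\sigma}\bigl|1 - j_{n/2-1}(\lambda t)\bigr|^2 \,|\widehat{f}(\lambda x)|^2 \,\lambda^{n-1}\,\mathrm{d}x\,\mathrm{d}\lambda,
\]
which is the exact counterpart of \eqref{pla3.2} with $d\mu(\lambda)$ replaced by $\lambda^{n-1}\,\mathrm{d}\lambda$ and $H(\lambda)$ by $\widetilde H(\lambda) := \int_{\sigma}|\widehat{f}(\lambda x)|^2\,\mathrm{d}x$.

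Next I would record the three properties of $j_{n/2-1}$ analogous to Lemma \ref{muno}: (i) $|j_{\nu}(s)| \leq 1$ with equality only at $s=0$; (ii) $1 - j_{\nu}(s) \leq C s^2$ for all $s$ (from the Taylor expansion of $j_{\nu}$ near $0$, combined with boundedness for large $s$); and crucially (iii) there is a constant $c > 0$, independent of $\lambda$ and $t$, with $1 - j_{n/2-1}(\lambda t) \geq c$ whenever $\lambda t \geq 1$. Property (iii) is the only delicate point: since $j_{\nu}(s) \to 0$ as $s \to \infty$ but oscillates, one must check that $j_{\nu}(s)$ stays bounded away from $1$ on $[1,\infty)$; this follows because $j_{\nu}$ is strictly less than $1$ on $(0,\infty)$, is continuous, and tends to $0$ at infinity, so its supremum over $[1,\infty)$ is some $1 - c < 1$. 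With these three facts in hand, the argument of Theorem \ref{main1} transfers verbatim: from $f \in \text{Lip}^s_{\mathbb{R}^n}(\omega_k)$ and property (iii),
\[
\int_{1/t}^{+\infty}\widetilde H(\lambda)\,\lambda^{n-1}\,\mathrm{d}\lambda \leq \frac{1}{c^2}\int_{1/t}^{+\infty}\bigl|1 - j_{n/2-1}(\lambda t)\bigr|^2\widetilde H(\lambda)\,\lambda^{n-1}\,\mathrm{d}\lambda \leq \frac{1}{c^2}\,\|S^t f - f\|_{L^2(\mathbb{R}^n)}^2 \leq C\,\omega_k(t)^2,
\]
and then, using $\lambda \geq 1/t$ so that $\lambda^{n-1} \geq t^{-(n-1)}$,
\[
\int_{1/t}^{+\infty}\int_{\sigma}|\widehat{f}(\lambda x)|^2\,\mathrm{d}\lambda\,\mathrm{d}x = \int_{1/t}^{+\infty}\widetilde H(\lambda)\,\mathrm{d}\lambda \leq t^{n-1}\int_{1/t}^{+\infty}\widetilde H(\lambda)\,\lambda^{n-1}\,\mathrm{d}\lambda \leq C\,\omega_k(t)^2 t^{n-1},
\]
which is \eqref{pla1.13sc}.

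The main obstacle is therefore not the functional-analytic skeleton, which is identical to the symmetric-space case, but rather setting up the Euclidean dictionary carefully: identifying the spherical mean multiplier as the normalised Bessel function $j_{n/2-1}$, verifying in particular the uniform lower bound (iii) on $1 - j_{n/2-1}$ away from the origin, and handling the passage to polar coordinates so that the measure $\lambda^{n-1}\,\mathrm{d}\lambda\,\mathrm{d}x$ correctly replaces $d\mu(\lambda)\,db$. The hypotheses that $\omega_k$ is bounded below on $[\delta_0,\infty)$ and $\omega_k(t)^2/t^5 \in L^1([\delta_0,\infty))$ are not needed for this direction (they are only invoked in the converse, as in Theorem \ref{main2}), but they are stated here for uniformity with the companion result. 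Once the dictionary is in place, the inequalities above are routine and the proof concludes as shown.
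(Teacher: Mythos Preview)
Your proposal is correct and follows exactly the approach the paper intends: the paper does not give a separate proof of Theorem \ref{main1sc} but merely states that it ``follows similarly as those given in Theorems \ref{main1}, \ref{main2}'', and you have carried out precisely that translation, with the normalised Bessel function $j_{n/2-1}$ in the role of $\varphi_\lambda$, the polar measure $\lambda^{n-1}\,\mathrm{d}\lambda$ replacing $d\mu(\lambda)$, and Lemma \ref{muno}(3) replaced by the corresponding lower bound for $1-j_{n/2-1}$. The only caveat (which the paper likewise does not address) is that your argument for property (iii) uses that $j_{n/2-1}(s)<1$ strictly on $(0,\infty)$, which is valid for $n\geq 2$ but fails for $n=1$ since $j_{-1/2}(s)=\cos s$; in that borderline case one would need the classical averaging device from Titchmarsh's original proof instead.
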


\begin{theorem}\label{main2sc}
Let $\omega_k$ be a modulus of continuity of order $k\leq2$ satisfying the conditions \eqref{cond4} and \eqref{cond3}. It is also assumed that $\omega_k(t)$ is bounded below on $[\delta_0,\infty)$ by a positive number, and that $\omega_k(t)^2/t^{5}\in L^1\big([\delta_0,\infty)\big)$. If $f\in L^2(\mathbb{R}^n)$ and there exists a constant $C>0$ such that for all sufficiently small $t>0$ the condition \eqref{pla1.13sc} holds, then $f\in \text{Lip}^s_{\mathbb{R}^n}(\omega_k)$.
\end{theorem}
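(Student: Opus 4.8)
The plan is to transcribe the proof of Theorem \ref{main2} into the Euclidean setting, the only genuinely new ingredient being the replacement of the spherical functions $\varphi_\lambda$ and the Harish--Chandra $c$-function by their flat counterparts. Recall that the spherical mean operator \eqref{toperatorp} is a Fourier multiplier: writing $j_\nu$ for the normalised Bessel function $j_\nu(r)=\Gamma(\nu+1)(r/2)^{-\nu}J_\nu(r)$ (so $j_\nu(0)=1$), one has
\[
\widehat{S^t f}(\xi)=j_{\frac n2-1}(t|\xi|)\,\widehat f(\xi),\qquad \xi\in\mathbb{R}^n,\ t\ge0.
\]
Writing $\xi=\lambda x$ in polar coordinates with $\lambda=|\xi|>0$ and $x\in\sigma=S^{n-1}$, and setting $H(\lambda)=\int_\sigma|\widehat f(\lambda x)|^2\,\mathrm{d}x$, the Plancherel theorem on $\mathbb{R}^n$ gives the analogue of \eqref{pla3.2}:
\[
\|S^tf-f\|_{L^2(\mathbb{R}^n)}^2=\int_0^{+\infty}\bigl|1-j_{\frac n2-1}(t\lambda)\bigr|^2H(\lambda)\,\lambda^{n-1}\,\mathrm{d}\lambda,
\]
so the measure $\lambda^{n-1}\,\mathrm{d}\lambda$ plays the role of $\mathrm{d}\mu(\lambda)$ here (consistent with \eqref{pla3.6}, which in the flat case holds with equality). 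The first step is therefore to record the Euclidean analogue of Lemma \ref{muno} for $j:=j_{\frac n2-1}$: (i) $|j(r)|\le1$ for all $r\ge0$, with equality only at $r=0$; (ii) $1-j(r)\le Cr^2$ for all $r\ge0$; (iii) there is $C>0$ with $1-j(r)\ge C$ whenever $r\ge1$. Here (i) and (ii) follow from the power series of $j_\nu$ together with the trivial bound $1-j\le2$ when $r$ is bounded away from $0$, and (iii) follows from the strict monotone decrease of $j_\nu$ on the interval up to the first positive zero of $J_{\nu+1}$ together with the standard decay and oscillation bounds for Bessel functions; these are precisely the facts used in the spherical-mean literature on $\mathbb{R}^n$.

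With these in hand the argument proceeds as in Theorem \ref{main2}. From the hypothesis \eqref{pla1.13sc}, i.e. $\int_{1/t}^{+\infty}H(\lambda)\,\mathrm{d}\lambda\le C\omega_k(t)^2t^{n-1}$, the same dyadic decomposition --- bounding $\int_{2^j/t}^{2^{j+1}/t}H(\lambda)\lambda^{n-1}\,\mathrm{d}\lambda$ on each annulus by $(2^{j+1}/t)^{n-1}\int_{2^j/t}^{+\infty}H(\lambda)\,\mathrm{d}\lambda$ and summing, using that $\omega_k(s)/s^{\delta}$ is almost increasing for some $\delta\in(0,m(\omega_k))$ (the positivity $m(\omega_k)>0$ being exactly the content of condition \eqref{cond4} via Theorem \ref{rafeiro}) --- yields
\[
\int_{1/t}^{+\infty}H(\lambda)\,\lambda^{n-1}\,\mathrm{d}\lambda\le C\omega_k(t)^2,
\]
the analogue of \eqref{pla3.14}, which (by Plancherel and the lower boundedness of $\omega_k$ on $[\delta_0,\infty)$) may be assumed to hold for all $t>0$. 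Splitting $\|S^tf-f\|_{L^2(\mathbb{R}^n)}^2=J_1+J_2$ at $\lambda=1/t$, the tail satisfies $J_2\le4\int_{1/t}^{+\infty}H(\lambda)\lambda^{n-1}\,\mathrm{d}\lambda\le C\omega_k(t)^2$ by (i), and by (ii),
\[
J_1\le Ct^4\int_0^{1/t}\lambda^4H(\lambda)\,\lambda^{n-1}\,\mathrm{d}\lambda.
\]
Since $\mathbb{R}^n$ is flat ($\rho=0$), there is no extra term of type $K_2$, so only the analogue of $K_1$ remains. Setting $\phi(s)=\int_s^{+\infty}H(\lambda)\lambda^{n-1}\,\mathrm{d}\lambda$, so that $-\phi'(s)=H(s)s^{n-1}$ and $\phi(s)\le C\omega_k(1/s)^2$ for all $s>0$, an integration by parts gives $\int_0^{1/t}\lambda^4(-\phi'(\lambda))\,\mathrm{d}\lambda\le4\int_0^{1/t}\lambda^3\phi(\lambda)\,\mathrm{d}\lambda\le C\int_0^{1/t}\lambda^3\omega_k(1/\lambda)^2\,\mathrm{d}\lambda=C\int_t^{+\infty}\tfrac{\omega_k(u)^2}{u^5}\,\mathrm{d}u$; splitting this last integral at $\delta_0$, the tail is finite by the hypothesis $\omega_k^2/t^5\in L^1[\delta_0,\infty)$, while the head is controlled by $\tfrac{\omega_k(t)}{t^{4-k}}\int_t^{\delta_0}\tfrac{\omega_k(u)}{u^{k+1}}\,\mathrm{d}u\le C\tfrac{\omega_k(t)^2}{t^4}$ using that $\omega_k(u)/u^{4-k}$ is almost decreasing ($4-k\ge k$ since $k\le2$) together with the Zygmund condition \eqref{cond3}. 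Hence $J_1\le C(\omega_k(t)^2+t^4)\le C\omega_k(t)^2$, the final step using $t^2\le C\omega_k(t)$ (valid since $k\le2$), and therefore $\|S^tf-f\|_{L^2(\mathbb{R}^n)}\le C\omega_k(t)$, i.e. $f\in\text{Lip}^s_{\mathbb{R}^n}(\omega_k)$.

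I expect the only real obstacle to be the verification of the Bessel estimate (iii) --- the uniform lower bound $1-j_{\frac n2-1}(r)\ge C$ for $r\ge1$ --- since it is the single point where more than the series expansion of $j_\nu$ is needed; everything else is a direct translation of the symmetric-space argument, with the simplification that the curvature term $\rho$ disappears. One should also confirm that the multiplier identity $\widehat{S^tf}=j_{\frac n2-1}(t|\cdot|)\widehat f$, initially valid on $C_0(\mathbb{R}^n)$, extends to all of $L^2(\mathbb{R}^n)$ by the stated density and continuity, so that the Plancherel-type identity above is legitimate.
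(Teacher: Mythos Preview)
Your proposal is correct and follows precisely the approach the paper indicates: the paper gives no separate proof of Theorem \ref{main2sc}, merely stating that it ``follows similarly'' to Theorem \ref{main2}, and your write-up carries out exactly that transcription, with the normalised Bessel function $j_{n/2-1}$ standing in for $\varphi_\lambda$, the Euclidean weight $\lambda^{n-1}\,\mathrm{d}\lambda$ for $\mathrm{d}\mu(\lambda)$, and the simplification $\rho=0$ removing the $K_2$ term. The Bessel estimates you flag in (i)--(iii) are the standard flat analogues of Lemma \ref{muno} and require no new ideas beyond what you sketch.
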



\begin{theorem}\label{rere2}
If $\omega_k$ is a $k$th-order modulus of continuity, for $k\leq2$, satisfying the Zygmund conditions \eqref{cond4} and \eqref{cond3}, and $\omega_k(t)$ is bounded below on $[\delta_0,\infty)$ by a positive number, and that $\omega_k(t)^2/t^{5}\in L^1\big([\delta_0,\infty)\big)$. If $f$ is a function in $L^2(\mathbb{R}^n)$, then the following statements are equivalent.
\begin{enumerate}
\item There exists a constant $C>0$ such that for all sufficiently small $t>0$,
\[ \|S^t f-f\|_{L^2(\mathbb{R}^n)}\leq C\omega_2(t). \]
\item There exists a constant $C>0$ such that for all sufficiently small $t>0$,
\[\int_{1/t}^{+\infty}\int_{\sigma}|\widehat{f}(tx)|^2 dtdx\leq C\omega_k(t)^2t^{n-1}.\]
\end{enumerate}
\end{theorem}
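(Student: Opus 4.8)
Theorem~\ref{rere2} is the conjunction of the two one-directional statements already recorded as Theorems~\ref{main1sc} and~\ref{main2sc}, so the plan is to explain how those are obtained by transcribing, essentially line for line, the arguments of Theorems~\ref{main1} and~\ref{main2} under the obvious dictionary. In place of the spherical function $\varphi_\lambda(t)$ one uses the normalised spherical Bessel multiplier of the spherical mean operator,
\[
\Omega_n(r)=\Gamma\!\left(\tfrac n2\right)\left(\tfrac2r\right)^{(n-2)/2}J_{(n-2)/2}(r)=\frac1{|\sigma|}\int_\sigma e^{i\langle y,\eta\rangle}\,\mathrm dy\qquad(|\eta|=r),
\]
which is the Fourier transform of the normalised surface measure on the unit sphere. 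The measure $\mathrm d\mu(\lambda)=|c(\lambda)|^{-2}\,\mathrm d\lambda$, which in the symmetric-space setting only obeyed $|c(\lambda)|^{-2}\approx\lambda^{n-1}$, is replaced by the \emph{exact} polar density $\lambda^{n-1}\,\mathrm d\lambda$ on $\mathbb R^+$ coming from $\mathrm d\xi=\lambda^{n-1}\,\mathrm d\lambda\,\mathrm dx$ under $\xi=\lambda x$, $\lambda=|\xi|$, $x\in\sigma$; and Lemma~\ref{muno} is replaced by its classical Euclidean counterpart for $\Omega_n$, with $\rho=0$.

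The common starting point is the multiplier identity $\widehat{S^tf}(\xi)=\Omega_n(t|\xi|)\widehat f(\xi)$, which follows by translating inside the spherical average in \eqref{toperatorp} and using $\widehat{f(\cdot+ty)}(\xi)=e^{it\langle y,\xi\rangle}\widehat f(\xi)$; this is the exact analogue of \eqref{TSFrelation}. Combining it with polar coordinates and the Plancherel theorem on $\mathbb R^n$ gives the analogue of \eqref{pla3.2},
\[
\|S^tf-f\|_{L^2(\mathbb R^n)}^2=\int_0^{+\infty}\bigl|1-\Omega_n(t\lambda)\bigr|^2H(\lambda)\,\lambda^{n-1}\,\mathrm d\lambda,\qquad H(\lambda)=\int_\sigma|\widehat f(\lambda x)|^2\,\mathrm dx.
\]
For the direction \textbf{(1)}$\Rightarrow$\textbf{(2)} I would argue as in Theorem~\ref{main1}: on $\{\lambda\ge1/t\}$ the lower bound $1-\Omega_n(t\lambda)\ge C$ dominates $H(\lambda)\lambda^{n-1}$ by $C\,|1-\Omega_n(t\lambda)|^2H(\lambda)\lambda^{n-1}$, so integrating and using the displayed identity together with the Lipschitz hypothesis gives $\int_{1/t}^{+\infty}H(\lambda)\lambda^{n-1}\,\mathrm d\lambda\le C\omega_k(t)^2$; inserting $t^{n-1}$ and using $\lambda^{-(n-1)}\le t^{n-1}$ on that range then yields $\int_{1/t}^{+\infty}\int_\sigma|\widehat f(\lambda x)|^2\,\mathrm d\lambda\,\mathrm dx\le t^{n-1}\int_{1/t}^{+\infty}H(\lambda)\lambda^{n-1}\,\mathrm d\lambda\le C\omega_k(t)^2t^{n-1}$.

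For the converse \textbf{(2)}$\Rightarrow$\textbf{(1)} I would follow the proof of Theorem~\ref{main2}. From the Fourier hypothesis one first obtains $\int_{1/t}^{2/t}H(\lambda)\lambda^{n-1}\,\mathrm d\lambda\le C\omega_k(t)^2$ by bounding $\lambda^{n-1}\le(2/t)^{n-1}$ on that block, after which a dyadic summation over $t\mapsto t/2^j$ together with a Matuszewska--Orlicz exponent $0<\delta<m(\omega_k)$ (for which $\omega_k(t)/t^\delta$ is almost increasing, by Theorem~\ref{rafeiro}) upgrades this to $\int_{1/t}^{+\infty}H(\lambda)\lambda^{n-1}\,\mathrm d\lambda\le C\omega_k(t)^2$. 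Splitting the Plancherel identity at $\lambda=1/t$ as $J_1+J_2$ (with $J_1$ the integral over $(0,1/t)$ and $J_2$ that over $(1/t,+\infty)$), one bounds $J_2\le 4\int_{1/t}^{+\infty}H(\lambda)\lambda^{n-1}\,\mathrm d\lambda\le C\omega_k(t)^2$ from $|\Omega_n|\le1$, while for $J_1$ one uses $1-\Omega_n(t\lambda)\le Ct^2\lambda^2$ to get $J_1\le Ct^4\int_0^{1/t}\lambda^4H(\lambda)\lambda^{n-1}\,\mathrm d\lambda$ — here $\rho=0$ means the auxiliary term $K_2$ of that proof does not arise at all. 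With $\phi(s)=\int_s^{+\infty}H(\lambda)\lambda^{n-1}\,\mathrm d\lambda$, an integration by parts (discarding the nonpositive boundary term) and the substitution $u=1/s$ reduce $J_1$ to a constant times $t^4\int_t^{+\infty}\omega_k(u)^2u^{-5}\,\mathrm du$, which one splits at $\delta_0$: the tail is controlled by the assumption $\omega_k^2/t^5\in L^1[\delta_0,+\infty)$ and then by $t^2\le C\omega_k(t)$, and the head by extracting the almost-decreasing factor $\omega_k(u)/u^{4-k}$ (legitimate since $4-k\ge k$ when $k\le2$) and invoking the Zygmund condition \eqref{cond3}, which gives $t^4\cdot\frac{\omega_k(t)}{t^{4-k}}\cdot\frac{\omega_k(t)}{t^k}=\omega_k(t)^2$ up to constants. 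Hence $\|S^tf-f\|_{L^2(\mathbb R^n)}\le C\omega_k(t)$.

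The only genuinely new input beyond this transcription is the Euclidean counterpart of Lemma~\ref{muno} for $\Omega_n$, and this is where the one nontrivial point lies. Since $\Omega_n(r)=\frac1{|\sigma|}\int_\sigma\cos\langle y,\eta\rangle\,\mathrm dy$ with $|\eta|=r$, parts~(1) and~(2) are immediate: $|\Omega_n(r)|\le1$ with equality only at $r=0$ (the Fourier transform of a probability measure not concentrated at a point cannot have modulus $1$ away from the origin), and $1-\Omega_n(r)\le\frac1{2|\sigma|}\int_\sigma\langle y,\eta\rangle^2\,\mathrm dy=\frac{r^2}{2n}$ from $1-\cos s\le s^2/2$. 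The delicate part — the main obstacle — is part~(3), the uniform estimate $1-\Omega_n(r)\ge C$ for $r\ge1$: for $n\ge2$ this follows by combining the strict inequality $\Omega_n(r)<1$ for $r>0$ with the decay $\Omega_n(r)=O(r^{-(n-1)/2})$ as $r\to\infty$, which force $\sup_{r\ge1}\Omega_n(r)<1$, and then a compactness argument supplies the constant. (For $n=1$, where $\Omega_1(r)=\cos r$ and $1-\Omega_1(2\pi)=0$, this step breaks down and one must instead invoke the classical Titchmarsh averaging device, so the intended scope is $n\ge2$.) Everything else is the line-by-line translation described above, which is in fact slightly simpler than the symmetric-space case since the polar density is exact rather than asymptotic and the $\rho$-term disappears.
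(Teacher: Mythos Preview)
Your proposal is correct and follows precisely the route the paper intends: the paper itself gives no proof beyond the remark that Theorems~\ref{main1sc}, \ref{main2sc} and \ref{rere2} ``follow similarly'' to Theorems~\ref{main1}, \ref{main2} and \ref{main3}, and your transcription via the dictionary $\varphi_\lambda\leadsto\Omega_n$, $|c(\lambda)|^{-2}\,\mathrm d\lambda\leadsto\lambda^{n-1}\,\mathrm d\lambda$, $\rho\leadsto0$ is exactly what is meant. Your explicit verification of the Euclidean analogue of Lemma~\ref{muno} and the caveat about $n=1$ (where $\Omega_1=\cos$ and part~(3) fails) are in fact more careful than anything the paper records.
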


The above is an analogue of the classical theorem of Titchmarsh \cite[Theorem 85]{ti}, extended to $n$-dimensional space and with bounds given by generalised $k$th-order moduli of continuity rather than just power functions. The following corollary, given like Corollary \ref{cor1} above by the special case $\omega(t)=t^{\alpha}$ with $0<\alpha<2$, looks even more similar to the theorem of Titchmarsh, but still taking place in $n$-dimensional space.

\begin{corollary}
For $\alpha\in(0,2)$ and $f\in L^2(\mathbb{R}^n)$, the following assertions are equivalent.
\begin{enumerate}
\item There exists a constant $C>0$ such that for all sufficiently small $t>0$,
\[ \|S^t f-f\|_{L^2(\mathbb{R}^n)}\leq Ct^{\alpha}. \]
\item There exists a constant $C>0$ such that for all sufficiently small $t>0$,
\[\int_{1/t}^{+\infty}\int_{\sigma}|\widehat{f}(tx)|^2 dtdx\leq Ct^{2\alpha+n-1}.\]
\end{enumerate}
\end{corollary}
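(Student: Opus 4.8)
The plan is to derive this corollary directly from Theorem \ref{rere2} by taking $\omega_k(t) = t^\alpha$ with $k = 2$, so the only real work is checking that this choice of $\omega_2$ satisfies all the hypotheses demanded there. Following the construction already used for Corollary \ref{cor1}, I would set $\omega(t) = t^\alpha$ for $t \in [0,\delta_0]$ and extend it by the constant value $\omega(t) \equiv \delta_0^\alpha$ for $t \geq \delta_0$; this extension is harmless for the statement near $0$ and guarantees the behaviour on $[\delta_0,\infty)$ required by the theorem.

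First I would verify that $\omega$ is a $2$nd-order modulus of continuity: $\omega(0)=0$ and $\omega$ is continuous; $\omega(t)=t^\alpha$ is increasing on $[0,\delta_0]$, hence almost increasing; and $\omega(t)/t^2 = t^{\alpha-2}$ is decreasing on $[0,\delta_0]$ because $\alpha < 2$, hence almost decreasing. Second, I would check the two Zygmund conditions on $[0,\delta_0]$: for \eqref{cond4} one computes $\int_0^t x^{\alpha-1}\,\mathrm{d}x = t^\alpha/\alpha \leq C\,\omega(t)$ since $\alpha>0$; for \eqref{cond3} one computes $\int_t^{\delta_0} x^{\alpha-3}\,\mathrm{d}x = (t^{\alpha-2}-\delta_0^{\alpha-2})/(2-\alpha) \leq t^{\alpha-2}/(2-\alpha) \leq C\,\omega(t)/t^2$, again using $0<\alpha<2$. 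Third, the two tail conditions are immediate: on $[\delta_0,\infty)$ the function $\omega$ equals the positive constant $\delta_0^\alpha$, so it is bounded below there, and $\omega(t)^2/t^5 = \delta_0^{2\alpha}/t^5$ is integrable on $[\delta_0,\infty)$.

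With all hypotheses of Theorem \ref{rere2} satisfied for this $\omega_2$, the equivalence of its two statements specialises exactly to the desired assertions: statement (1) reads $\|S^t f - f\|_{L^2(\mathbb{R}^n)} \leq C\,\omega_2(t) = C t^\alpha$, and statement (2) reads $\int_{1/t}^{+\infty}\int_\sigma |\widehat{f}(tx)|^2\,\mathrm{d}t\,\mathrm{d}x \leq C\,\omega_k(t)^2 t^{n-1} = C t^{2\alpha+n-1}$. I do not expect any genuine obstacle here beyond bookkeeping; the only point meriting a moment's care is confirming that the piecewise definition of $\omega$ (power function glued to a constant at $\delta_0$) still satisfies the monotonicity-up-to-a-constant conditions globally, but this is handled exactly as in Corollary \ref{cor1}, with the constant in each ``almost'' inequality taken uniform across the join.
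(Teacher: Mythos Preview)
Your proposal is correct and follows exactly the approach indicated in the paper: the corollary is obtained from Theorem \ref{rere2} by specialising to $\omega(t)=t^{\alpha}$ with $k=2$, just as Corollary \ref{cor1} was obtained from Theorem \ref{main3}. The paper does not spell out the hypothesis verifications you supply, but your checks are accurate and mirror the construction described before Corollary \ref{cor1}.
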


\section*{Acknowledgements}

Joel E. Restrepo and Durvudkhan Suragan were supported in parts by the Nazarbayev University program 091019CRP2120. Joel E. Restrepo thanks to Colciencias and Universidad de Antioquia (Convocatoria 848 - Programa de estancias postdoctorales 2019) for their support.

\end{document}